\title[Modifications of Tutte-Grothendieck invariants]{Modifications of Tutte-Grothendieck invariants and Tutte polynomials}
\author{Martin Kochol}
\address{M\'U SAV, \v Stef\'anikova 49, 814 73 Bratislava 1, Slovakia}
\email{martin.kochol@mat.savba.sk}
\subjclass[2010]{05C31, 05B35}
\def\tg{\Phi}
\def\lsx{^{\rm ls}_{\xi}}
\def\isx{^{\rm is}_{\xi}}
\newtheorem{thm}{Theorem}
\newtheorem{lem}{Lemma}
\begin{document}
\maketitle
\begin{abstract}
We transform Tutte-Grothedieck invariants thus also Tutte polynomials on matroids so that the contraction-deletion rule for loops
(isthmuses) coincides with the general case.
\end{abstract}

\section{Introduction}

A {\it Tutte-Grothendieck invariant} (shortly a {\it T-G invariant}) $\tg$ is a mapping from the class of finite matroids to a commutative ring
$(R,+,\cdot,0,1)$ such that $\tg(M)=\tg(M')$ if $M$ is isomorphic to $M'$ and there are constants $\alpha_1,\beta_1,\alpha_2,\beta_2\in R$ such that
\begin{eqnarray}\label{e-tg1}
\begin{array}{lll}
&\tg(M) = 1  & \text{ if the ground set of $M$ is empty,}\\ 
&\tg(M) = \alpha_1 \cdot \tg(M-e) & \text{ if $e$ is an isthmus of $M$,} \\
&\tg(M) = \beta_1 \cdot \tg(M-e) & \text{ if $e$ is a loop of $M$,} \\
&\tg(M) = \alpha_2 \cdot \tg(M/e)  + \beta_2 \cdot \tg(M-e)  & \text{ otherwise,}
\end{array}
\end{eqnarray}
for every matroid $M$ and every element $e$ of $M$. We also say that $\tg$ is {\it determined} by the 4-tuple
$(\alpha_1,\beta_1,\alpha_2,\beta_2)$. In certain sense (see \cite{OW,BO}), all T-G invariants can be reduced from the {\it Tutte
polynomial} of $M$
\begin{eqnarray}\label{e-def1}
T(M;x,y) =\sum_{A\subseteq E}(x-1)^{r(M)-r(A)}(y-1)^{|A|-r(A)},
\end{eqnarray}
where $E$ and $r$ denote the ground set and rank function of $M$, respectively. This is very important invariant that encodes many properties of graphs and has applications in combinatorics, knot theory, statistical physics and coding theory (see cf. \cite{BEPS,BO,W}).

$M-e=M/e$ if $e$ is a loop or an isthmus of $M$. Thus the second (third) row of (\ref{e-tg1}) is contained in the fourth row if
$\alpha_1=\alpha_2+\beta_2$ ($\beta_1=\alpha_2+\beta_2$). In this case $\tg$ is called an {\it isthmus-smooth} ({\it loop-smooth})
T-G invariant.

We show that any T-G invariant can be transformed to an isthmus- and loop-smooth T-G invariants. The transformations are studied in
framework of matroid duality. Furthermore, we discuss modifications of covariance and convolution formulas known for the Tutte polynomial.
Notice that transformations into isthmus-smooth invariants are used by decomposition algorithms of T-G invariants in \cite{K}.

\section{General modifications}

\begin{lem}\label{tut-p2} Let $\alpha_1,\beta_1,\alpha_2,\beta_2$ be arbitrary elements of a commutative ring $(R,+,\cdot,0,1)$. Then $\tilde
T(M;\alpha_1,\beta_1,\alpha_2,\beta_2)=\alpha_2^{r(M)}\beta_2^{r^*(M)}T(M;\alpha_1/\alpha_2,\beta_1/\beta_2)$ is the unique T-G invariant
determined by $(\alpha_1,\beta_1,\alpha_2,\beta_2)$.
\end{lem}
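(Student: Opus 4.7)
The statement has two parts: uniqueness of any T-G invariant with a prescribed 4-tuple, and the explicit formula. I would prove them in that order.

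\emph{Uniqueness.} This is a straightforward induction on $|E(M)|$. The base case $E(M)=\emptyset$ is forced by the first clause of \eqref{e-tg1}. For the inductive step, fix any $e\in E(M)$: exactly one of the three remaining clauses applies, and it expresses $\tg(M)$ as a fixed $R$-linear combination of $\tg$-values on $M-e$ (and possibly $M/e$), which by hypothesis are already determined by $(\alpha_1,\beta_1,\alpha_2,\beta_2)$.

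\emph{Existence.} First I would clarify the meaning of the formula. Expanding $T(M;\alpha_1/\alpha_2,\beta_1/\beta_2)$ via \eqref{e-def1} and multiplying by $\alpha_2^{r(M)}\beta_2^{r^*(M)}$ yields
\[
\tilde T(M;\alpha_1,\beta_1,\alpha_2,\beta_2)=\sum_{A\subseteq E}\alpha_2^{r(A)}(\alpha_1-\alpha_2)^{r(M)-r(A)}\beta_2^{r^*(M)-(|A|-r(A))}(\beta_1-\beta_2)^{|A|-r(A)},
\]
where all exponents are nonnegative since $r(M)-r(A)\le |E|-|A|$ is equivalent to $|A|-r(A)\le r^*(M)$. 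Thus $\tilde T$ is an honest polynomial in $\alpha_1,\beta_1,\alpha_2,\beta_2$ with integer coefficients and is well defined in every commutative ring $R$. To check \eqref{e-tg1} it suffices to treat it as a polynomial identity, i.e.\ to assume $\alpha_2,\beta_2$ are invertible.

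Under that assumption the four clauses reduce to known behaviour of the ranks under deletion and contraction combined with the classical Tutte recurrence: the empty case is immediate from $T(\emptyset)=1$; an isthmus gives $r(M)=r(M-e)+1$, $r^*(M)=r^*(M-e)$ and $T(M)=xT(M-e)$, so the prefactor $\alpha_2^{r(M)}\beta_2^{r^*(M)}\cdot(\alpha_1/\alpha_2)$ collapses to $\alpha_1\cdot\alpha_2^{r(M-e)}\beta_2^{r^*(M-e)}$, yielding $\alpha_1\tilde T(M-e)$; the loop case is symmetric; and for an ordinary $e$ one has $r(M/e)=r(M)-1$, $r^*(M/e)=r^*(M)$, $r(M-e)=r(M)$, $r^*(M-e)=r^*(M)-1$, so $T(M)=T(M/e)+T(M-e)$ produces exactly $\alpha_2\tilde T(M/e)+\beta_2\tilde T(M-e)$.

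\emph{Main obstacle.} The only delicate point is that $\alpha_1/\alpha_2$ and $\beta_1/\beta_2$ need not make sense in $R$. The cleanest remedy is to carry out the verification in $\mathbb{Z}[\alpha_1,\beta_1,\alpha_2,\beta_2,\alpha_2^{-1},\beta_2^{-1}]$, observe that the resulting identity actually lives in $\mathbb{Z}[\alpha_1,\beta_1,\alpha_2,\beta_2]$ thanks to the cancellations above, and specialize. Everything else is bookkeeping with ranks and exponents.
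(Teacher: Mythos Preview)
Your proof is correct and follows essentially the same approach as the paper: verify the deletion--contraction recurrence for $\tilde T$ by tracking how $r(M)$ and $r^*(M)$ change under deletion and contraction and invoking the known recurrence for $T(M;x,y)$. You are more explicit than the paper about uniqueness and about why the expression is well defined over an arbitrary commutative ring; the paper handles the latter only via a parenthetical remark about substituting into a four-variable polynomial, which is exactly the maneuver you spell out.
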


\begin{proof} For any matroid $M$, denote $\tg(M)=\tilde T(M;\alpha_1,\beta_1,\alpha_2,\beta_2)$ (interpreting the formula as the
substitution $x_2=\alpha_2$, $y_2=\beta_2$ in the polynomial $\tilde T(M;x_1,y_1,x_2,y_2)$). We use that $(T;x,y)$ is determined by
$(x,y,1,1)$ and induction on $|E|$. The statement of lemma holds true if $|E|=0$, otherwise choose $e\in E$. If $e$ is an isthmus of $M$,
then
\begin{eqnarray*}\begin{array}{c}
\tg(M)=\alpha_2^{r(M)}\beta_2^{r^*(M)}T(M;\alpha_1/\alpha_2,\beta_1/\beta_2)=\\
\alpha_2^{r(M-e)+1}\beta_2^{r^*(M-e)}\alpha_1/\alpha_2T(M-e;\alpha_1/\alpha_2,\beta_1/\beta_2)=\alpha_1\tg(M-e)
\end{array}\end{eqnarray*}
by induction hypothesis. If $e$ is a loop of $M$, then 
\begin{eqnarray*}\begin{array}{c}
\tg(M)=\alpha_2^{r(M-e)}\beta_2^{r^*(M-e)+1}\beta_1/\beta_2T(M-e;\alpha_1/\alpha_2,\beta_1/\beta_2)= \beta_1\tg(M-e).
\end{array}\end{eqnarray*}
If $e$ is neither a loop nor an isthmus of $M$, then 
\begin{eqnarray*}\begin{array}{c}
\tg(M)=\alpha_2^{r(M/e)+1}\beta_2^{r^*(M/e)}T(M/e;\alpha_1/\alpha_2,\beta_1/\beta_2)+ \\
\alpha_2^{r(M-e)}\beta_2^{r^*(M-e)+1}T(M-e;\alpha_1/\alpha_2,\beta_1/\beta_2)= \alpha_2\tg(M/e)+\beta_2\tg(M-e).
\end{array}\end{eqnarray*}
This proves the statement.
\end{proof}

Lemma~\ref{tut-p2} also follows from results of Oxley and Welsh \cite{OW} (see \cite[Corollary 6.2.6]{BO}).

\begin{thm}\label{mod-t1} Let $\tg$ be a T-G invariant determined by $(\alpha_1,\beta_1,\alpha_2,\beta_2)$, $\beta_2\neq 0$, and $\xi\in R$ be a
multiple of $\beta_2$. Then $\tg\isx(M)=\xi^{|E|}\left(\frac{\alpha_1-\alpha_2}{\beta_2}\right )^{r^*(M)}\tg(M)$ is an isthmus-smooth
T-G invariant such that for every matroid $M$,
\begin{eqnarray*}
\begin{array}{ll}
\tg\isx(M) = 1  & \text{ if } E=\emptyset ,\\
\tg\isx(M) = \xi\beta_1 (\alpha_1-\alpha_2)/\beta_2 \tg\isx (M-e) & \text{ if $e$ is a loop of $M$,} \\
\tg\isx(M) =\xi\alpha_2 \tg\isx(M/e) + \xi(\alpha_1{-}\alpha_2)\tg\isx(M{-}e) & \text{ otherwise}.
\end{array}
\end{eqnarray*}
\end{thm}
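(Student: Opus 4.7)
The plan is a direct induction on $|E|$, verifying the three recursion identities in the statement. The base case $E=\emptyset$ is immediate: $|E|=0$ and $r^*(M)=0$, so all exponents vanish and $\tg\isx(M)=\tg(M)=1$ by Lemma~\ref{tut-p2}.

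For the inductive step I would first record how $|E|$ and $r^*$ behave under the three possible minor operations. When $e$ is a loop, $|E|$ drops by $1$ and $r^*$ drops by $1$; when $e$ is an isthmus, $|E|$ drops by $1$ while $r^*$ is unchanged; and when $e$ is neither, both $M/e$ and $M-e$ have ground set of size $|E|-1$, with $r^*(M/e)=r^*(M)$ and $r^*(M-e)=r^*(M)-1$. Substituting these into the definition of $\tg\isx$, together with the matching case of the defining recursion (\ref{e-tg1}) for $\tg$, yields each displayed identity. For a loop one picks up a factor $\xi$ from $|E|$ and a factor $(\alpha_1-\alpha_2)/\beta_2$ from $r^*$, giving coefficient $\xi\beta_1(\alpha_1-\alpha_2)/\beta_2$. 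In the general case the contraction term contributes exactly $\xi\alpha_2\tg\isx(M/e)$, while in the deletion term the $\beta_2$ from the original recursion cancels against the $(\alpha_1-\alpha_2)/\beta_2$ introduced as $r^*$ drops by one, giving $\xi(\alpha_1-\alpha_2)\tg\isx(M-e)$.

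The remaining point is isthmus-smoothness, namely that the general rule specializes correctly on an isthmus. Since $M/e=M-e$ whenever $e$ is an isthmus, applying the general rule yields coefficient $\xi\alpha_2+\xi(\alpha_1-\alpha_2)=\xi\alpha_1$ on $\tg\isx(M-e)$; on the other hand, direct substitution using $\tg(M)=\alpha_1\tg(M-e)$ together with $r^*(M-e)=r^*(M)$ produces exactly the same coefficient, confirming that the general rule correctly absorbs the isthmus case. One subtlety worth flagging is that $(\alpha_1-\alpha_2)/\beta_2$ need not itself belong to $R$; however, the hypothesis that $\xi$ is a multiple of $\beta_2$ together with $r^*(M)\le|E|$ allows one to regroup the scalar as $\xi^{|E|-r^*(M)}\bigl(\xi(\alpha_1-\alpha_2)/\beta_2\bigr)^{r^*(M)}$, which is a bona fide element of $R$. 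Once this interpretation is fixed there is no real obstacle: the proof is routine bookkeeping, the only thing to get right being the exponent changes in $r^*$ and their interaction with the factor $(\alpha_1-\alpha_2)/\beta_2$.
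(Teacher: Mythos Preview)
Your argument is correct, but it is not the paper's. The paper routes everything through Lemma~\ref{tut-p2}: it writes $\tg(M)=\alpha_2^{r(M)}\beta_2^{r^*(M)}T(M;\alpha_1/\alpha_2,\beta_1/\beta_2)$, absorbs the prefactor $\xi^{|E|}((\alpha_1-\alpha_2)/\beta_2)^{r^*(M)}$ into new parameters $(\xi\alpha_2,\zeta\beta_2)$ with $\zeta=\xi(\alpha_1-\alpha_2)/\beta_2$, and then invokes Lemma~\ref{tut-p2} a second time to read off that $\tg\isx$ is the T-G invariant determined by $(\xi\alpha_1,\zeta\beta_1,\xi\alpha_2,\zeta\beta_2)$, from which isthmus-smoothness is the single identity $\xi\alpha_1=\xi\alpha_2+\zeta\beta_2$. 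Your approach instead verifies each recursion line by direct bookkeeping on how $|E|$ and $r^*$ change under deletion and contraction; this is more elementary, never touches the Tutte polynomial, and in fact does not need Lemma~\ref{tut-p2} at all (your base case follows already from the first line of~(\ref{e-tg1})). The paper's route is more modular and instantly names the determining $4$-tuple; yours is self-contained and makes transparent exactly where each scalar factor comes from.

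Two minor remarks. First, the word ``induction'' is a slight misnomer: you never invoke an inductive hypothesis, since each displayed identity for $\tg\isx$ follows immediately from the corresponding line of~(\ref{e-tg1}) for $\tg$ and the definition of $\tg\isx$. Second, your handling of the ring-membership issue --- regrouping the scalar as $\xi^{r(M)}\bigl(\xi(\alpha_1-\alpha_2)/\beta_2\bigr)^{r^*(M)}$ --- is exactly the right fix and matches the paper's own remark after the theorem.
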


\begin{proof} By Lemma~\ref{tut-p2}, $\tg(M)=\alpha_2^{r(M)}\beta_2^{r^*(M)}T(M;\alpha_1/\alpha_2,\beta_1/\beta_2)$ for each matroid $M$.
Setting $\zeta=\xi\left(\frac{\alpha_1-\alpha_2}{\beta_2}\right)$ and using equality $|E|=r(M)+r^*(M)$, we get
\begin{eqnarray*}
\begin{array}{c}
\tg\isx(M)=\xi^{r(M)}\zeta^{r^*(M)}\tg(M)=
(\xi\alpha_2)^{r(M)}(\zeta\beta_2)^{r^*(M)}T(M;\frac{\xi\alpha_1}{\xi\alpha_2},\frac{\zeta\beta_1}{\zeta\beta_2}),
\end{array}
\end{eqnarray*}
whence by Lemma~\ref{tut-p2}, $\tg\isx$ is a T-G invariant determined by $(\xi\alpha_1,\zeta\beta_1,\xi\alpha_2,\zeta\beta_2)$.
Furthermore, $\xi\alpha_1=\xi\alpha_2+\zeta\beta_2$, i.e., $\tg\isx$ is an isthmus-smooth T-G invariant.
\end{proof}

$\tg\isx$ is called the {\it $\xi$-isthmus-smooth modification} of $\tg$. Notice that if $\tg$ is an isthmus-smooth invariant
(i.e., if $\alpha_1=\alpha_2+\beta_2$), then $\tg\isx(M) =\xi^{|E|}\tg(M)$ for every matroid $M$.

If $R$ has zero divisors, then $\xi/\beta_2$ does not need to be unique. In this case we should formally replace fraction $\xi/\beta_2$ by
$\xi'$ where $\xi=\xi'\beta_2$. On the other hand if $\alpha_1-\alpha_2=\xi''\beta_2$, it suffices to replace fraction
$(\alpha_1-\alpha_2)/\beta_2$ by $\xi''$ and allow $\xi$ to be any element of $R$. If $R$ contains no zero divisors, we can extend $R$ into
its quotient field and allow $\xi$ to be any element of $R$, or any element of the quotient field.

If $\tg$ is a T-G invariant determined by $(\alpha_1,\beta_1,\alpha_2,\beta_2)$, then define $\tg^*$ as the T-G invariant determined by
$(\beta_1,\alpha_1,\beta_2,\alpha_2)$. Clearly, $\tg=(\tg^*)^*$. By Lemma~\ref{tut-p2},
$\tg(M)=\alpha_2^{r(M)}\beta_2^{r^*(M)}T(M;\alpha_1/\alpha_2,\beta_1/\beta_2)$ and
$\tg^*(M^*)=\beta_2^{r^*(M)}\alpha_2^{r(M)}T(M^*;\beta_1/\beta_2,\alpha_1/\alpha_2)$ for each matroid $M$. The covariance formula (see
\cite{BO}) is that $T(M;x,y) = T(M^*;y,x)$, whence
\begin{eqnarray}\label{e-tg4}
\tg(M) =\tg^*(M^*).
\end{eqnarray}

\begin{thm}\label{mod-t2} Let $\tg$ be a T-G invariant determined by $(\alpha_1,\beta_1,\alpha_2,\beta_2),\alpha_2\neq 0$, and $\xi\in R$ be a
multiple of $\alpha_2$. Then $\tg\lsx(M)=\xi^{|E|}\left(\frac{\beta_1-\beta_2}{\alpha_2}\right )^{r(M)}\tg(M)$ is a loop-smooth T-G
invariant such that for every matroid $M$,
\begin{eqnarray*}
\begin{array}{ll}
\tg\lsx(M) = 1  & \text{ if } E=\emptyset ,\\
\tg\lsx(M) = \xi\alpha_1 
(\beta_1-\beta_2)/\alpha_2 \tg\lsx (M-e) & \text{ if $e$ is an isthmus of $M$,} \\
\tg\lsx(M) =\xi(\beta_1{-}\beta_2)\tg\lsx(M/e) + \beta_2\tg\lsx(M{-}e) \hskip -2mm & \text{ otherwise}.
\end{array}
\end{eqnarray*}
\end{thm}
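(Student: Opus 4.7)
The plan is to imitate the proof of Theorem~\ref{mod-t1} verbatim, interchanging the roles of the isthmus and loop parameters by duality. The strategy is to invoke Lemma~\ref{tut-p2} twice: first to substitute the closed form $\alpha_2^{r(M)}\beta_2^{r^*(M)}T(M;\alpha_1/\alpha_2,\beta_1/\beta_2)$ for $\tg(M)$ in the definition of $\tg\lsx$, and then to recognize the resulting expression as the unique T-G invariant determined by an appropriate $4$-tuple, whose loop-smoothness will be immediate.

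The key substitution is $\eta=\xi(\beta_1-\beta_2)/\alpha_2$, which is well-defined in $R$ since $\xi$ is assumed to be a multiple of $\alpha_2$. This is the natural analog of the $\zeta$ used in the proof of Theorem~\ref{mod-t1}, chosen so that the eventual T-G parameters satisfy the loop-smoothness identity after rescaling. Using $|E|=r(M)+r^*(M)$, the defining formula for $\tg\lsx$ rewrites as $\eta^{r(M)}\xi^{r^*(M)}\tg(M)$, and substituting from Lemma~\ref{tut-p2} gives
\[
\tg\lsx(M)=(\eta\alpha_2)^{r(M)}(\xi\beta_2)^{r^*(M)}T\!\left(M;\tfrac{\eta\alpha_1}{\eta\alpha_2},\tfrac{\xi\beta_1}{\xi\beta_2}\right),
\]
so the uniqueness half of Lemma~\ref{tut-p2} identifies $\tg\lsx$ as the T-G invariant determined by the $4$-tuple $(\eta\alpha_1,\xi\beta_1,\eta\alpha_2,\xi\beta_2)$.

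It then remains only to observe loop-smoothness, $\xi\beta_1=\eta\alpha_2+\xi\beta_2$, which is a restatement of the defining equation $\eta\alpha_2=\xi(\beta_1-\beta_2)$ for $\eta$. The three recursions asserted in the theorem fall out by reading off the new parameters: the isthmus multiplier is $\eta\alpha_1=\xi\alpha_1(\beta_1-\beta_2)/\alpha_2$, the generic contraction--deletion coefficients are $\eta\alpha_2=\xi(\beta_1-\beta_2)$ and $\xi\beta_2$, and the loop case is absorbed into the generic one by smoothness.

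I do not anticipate any genuine obstacle: the argument is a symbolic twin of Theorem~\ref{mod-t1}, and the only creative step is guessing the rescaling $\eta$, whose form is forced by the loop-smoothness requirement. One could alternatively apply Theorem~\ref{mod-t1} to the dual invariant $\tg^*$, whose determining tuple is $(\beta_1,\alpha_1,\beta_2,\alpha_2)$, and then dualize back via the covariance formula~(\ref{e-tg4}); but the direct computation is short enough that invoking the duality machinery is not worth the extra setup.
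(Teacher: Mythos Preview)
Your proof is correct; it mirrors the argument of Theorem~\ref{mod-t1} directly, with $\eta=\xi(\beta_1-\beta_2)/\alpha_2$ playing the role of $\zeta$ and Lemma~\ref{tut-p2} invoked twice to identify $\tg\lsx$ as the T-G invariant determined by $(\eta\alpha_1,\xi\beta_1,\eta\alpha_2,\xi\beta_2)$.

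The paper, however, takes precisely the alternative route you mention and set aside at the end: it \emph{defines} $\tg\lsx=((\tg^*)\isx)^*$, uses the covariance formula~(\ref{e-tg4}) to write $\tg\lsx(M)=(\tg^*)\isx(M^*)$, and then applies Theorem~\ref{mod-t1} to $\tg^*$ (whose determining tuple is $(\beta_1,\alpha_1,\beta_2,\alpha_2)$) on $M^*$ to read off both the closed formula and the determining $4$-tuple $(\xi\alpha_1(\beta_1-\beta_2)/\alpha_2,\xi\beta_1,\xi(\beta_1-\beta_2),\xi\beta_2)$. Your direct computation is self-contained, needing only Lemma~\ref{tut-p2}; the paper's duality argument avoids repeating that computation and, more importantly, records the identity $\tg\lsx=((\tg^*)\isx)^*$ explicitly, which is then promoted to~(\ref{e-tg6}) and reused to derive the covariance formula~(\ref{e-cov6}) for the modified Tutte polynomials. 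Both approaches yield the same $4$-tuple, and neither is materially longer than the other.
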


\begin{proof} Set $\tg\lsx =((\tg^*)\isx)^*$. By (\ref{e-tg4}) and  Theorem~\ref{mod-t1}, $\tg\lsx(M)=((\tg^*)\isx)^*(M)=(\tg^*)\isx(M^*)$.  Applying
Theorem~\ref{mod-t1} for $\tg^*$ and $M^*$, we get $(\tg^*)\isx(M^*)=\xi^{|E|} \left(\frac{\beta_1-\beta_2}{\alpha_2}\right
)^{r(M)}\tg^*(M^*)=\xi^{|E|}\left(\frac{\beta_1-\beta_2}{\alpha_2}\right )^{r(M)}\tg(M)$. Furthermore by definition of $\tg^*$ and
Theorem~\ref{mod-t1}, $((\tg^*)\isx)^*$ is determined by
$\left(\xi\alpha_1(\beta_1-\beta_2)/\alpha_2,\xi\beta_1,\xi(\beta_1-\beta_2),\xi\beta_2\right)$.
\end{proof}

Notice that $\tg\lsx=((\tg^*)\isx)^*$, whence $\tg\isx =((((\tg^*)^*)\isx)^*)^*=((\tg^*)\lsx)^*$. Thus 
\begin{eqnarray}\label{e-tg6}
\tg\lsx =((\tg^*)\isx)^* \text{ and } \tg\isx =((\tg^*)\lsx)^*.
\end{eqnarray}

$\tg\lsx$ is called the {\it $\xi$-loop-smooth modification} of $\tg$. If $\tg$ is an isthmus invariant, then
$\tg\lsx(M)=\xi^{|E|}\tg(M)$ for every matroid $M$.

In Theorems~\ref{mod-t1} and \ref{mod-t2} we have assumed that $\beta_2\neq 0$ and $\alpha_2\neq 0$, respectively. Let $l_M$ ($i_M$) denote
the number of loops (isthmuses) in a matroid $M$. If $\alpha_2=0$, then by (\ref{e-tg1}),
$\tg(M)=\alpha_1^{r(M)}\beta_1^{l_M}\beta_2^{r^*(M)-l_M}$, whence by (\ref{e-tg4}),
$\tg(M)=\beta_1^{r^*(M)}\alpha_1^{i_M}\alpha_2^{r(M)-i_M}$ if $\beta_2=0$. Thus $\tg(M)$ is easy to evaluate if $\alpha_2=0$ or
$\beta_2=0$ (a contrast with the fact that the Tutte polynomial is difficult to evaluate, see \cite{GJ,JVW,V}).

\section{Modifications of the Tutte polynomial}

Let $\xi\in \Bbb Z[x,y]$. Then $\xi$ is a multiple of $1$ whence by Theorem~\ref{mod-t1}, the $\xi$-isthmus-smooth modification of the
Tutte polynomial of $M$ is
\begin{eqnarray}\label{e-def2}
\phantom{aaa}T\isx(M;x,y) = 
\xi^{|E|}(x-1)^{r^*(M)}T(M;x,y)
\end{eqnarray}
and satisfies
\begin{eqnarray}\label{e-cd3}
\begin{array}{lll}
\phantom{aaa}&T\isx(M;x,y) = 1 &\text{if } E=\emptyset ,\\
&T\isx(M;x,y) = \xi y(x-1)T\isx(M-e;x,y) &\text{if $e$ is a loop of $M$,} \\
&T\isx(M;x,y) = \xi T\isx(M/e;x,y) + \xi(x{-}1)T\isx(M{-}e;x,y) \hskip-1mm &\text{otherwise.}
\end{array}
\end{eqnarray}

By Theorem~\ref{mod-t2}, the $\xi$-loop-smooth modification of the Tutte polynomial of $M$ is
\begin{eqnarray}\label{e-def3} \phantom{aaa}
T\lsx(M;x,y) = \xi^{|E|}(y-1)^{r(M)}T(M;x,y)
\end{eqnarray}
and satisfies
\begin{eqnarray}\label{e-cd4}
\begin{array}{lll}
\phantom{aa}&T\lsx (M;x,y) = 1 &\text{if } E=\emptyset ,\\
&T\lsx(M;x,y) = \xi x(y-1) T\lsx(M-e;x,y) &\text{if $e$ is an isthmus,} \\ 
&T\lsx(M;x,y) = \xi (y{-}1)T\lsx(M{-}e;x,y) + \xi T\lsx(M{/}e;x,y) \hskip -1mm &\text{otherwise.}
\end{array}
\end{eqnarray}

By (\ref{e-tg4}), $T\lsx(M;x,y)=(T\lsx)^*(M^*;x,y)$, and by (\ref{e-tg6}), $(T\lsx)^*(M^*;x,y)=(T^*)\isx(M^*;x,y)$. By (\ref{e-def1}) and
(\ref{e-tg1}), we have $T^*(M^*;x,y)=T(M^*;y,x)$, whence $(T^*)\isx(M^*;x,y)=T\isx (M^*;y,x)$, i.e., we have a variant of the covariance
formula
\begin{eqnarray}\label{e-cov6}
T\lsx (M;x,y) =T\isx (M^*;y,x).
\end{eqnarray}

Kook, Reiner, and Stanton \cite{KRS} introduced the convolution formula
\begin{eqnarray*}
T(M;x,y)=\sum_{A\subseteq E}T(M/A;x,0)\cdot T(M|A;0,y),
\end{eqnarray*}
(where $M|A$ and $M/A$ denote the restriction of $M$ to $A$ and the contraction of $A$ from $M$, respectively).
Hence by (\ref{e-def2}) and (\ref{e-def3}),
\begin{eqnarray*}
T(M;x,y)= \sum_{A\subseteq E}\xi^{-|E|}(-1)^{-r(M/A)}T\lsx (M/A;x,0))\cdot (-1)^{-r^*(M|A)}T\isx (M|A;0,y).
\end{eqnarray*}
Since $r^*(M|A)=|A|-r(A)$, $r(M/A)=r(M)-r(A)$, and $2r(A)-r(M)-|A|$ has the same parity as $r(M)+|A|$, we get a variant of the convolution formula
\begin{eqnarray}\label{e-con4}
T(M;x,y)= \xi^{-|E|}(-1)^{r(M)} \sum_{A\subseteq E}(-1)^{|A|}T\lsx (M/A;x,0)\cdot T\isx (M|A;0,y).
\end{eqnarray}

The ring $\Bbb Z[x,y]$ has no divisors of zero, therefore it has a quotient field $\Bbb F[x,y]$, consisting of all rational polynomials
with integral coefficients. Thus, as pointed out in the remark after Theorem~\ref{mod-t1},  for any $\xi\in\Bbb F[x,y]$, we can consider
$\xi$-isthmus- and $\xi$-loop-smooth modifications of the Tutte polynomial, thus also formulas (\ref{e-cov6}), (\ref{e-con4}).

If $\tg$ is a T-G invariant determined by $(\alpha_1,\beta_1,\alpha_2,\beta_2)$ and $\xi,\zeta\in R$, then by Lemma~\ref{tut-p2}, there
exists a T-G invariant determined by $(\xi\alpha_1,\zeta\beta_1,\xi\alpha_2,\zeta\beta_2)$ denoted by $\tg_{\xi,\zeta}$. Clearly,
$\tg_{\xi,\zeta}(M)=\xi^{r(M)}\zeta^{r^*(M)}\tg(M)$ for each matroid $M$. Suppose that If $\tg_{\xi,\zeta}$ is isthmus- and loop-smooth in
the same time. Then $\xi\alpha_1=\zeta\beta_1=\xi\alpha_2+\zeta\beta_2$, whence
$\xi/\zeta=\beta_1/\alpha_1=\beta_2/(\alpha_1-\alpha_2)=(\beta_1-\beta_2)/\alpha_2$, and thus
\begin{eqnarray}\label{e-li}
\beta_1=\alpha_1\beta_2/(\alpha_1-\alpha_2).
\end{eqnarray}
On the other hand (\ref{e-li}) implies $\beta_1/\alpha_1=\beta_2/(\alpha_1-\alpha_2)$ and
$(\beta_1-\beta_2)/\alpha_2=\beta_2/(\alpha_1-\alpha_2)$. Thus (\ref{e-li}) is a necessary and sufficient condition for existence of $\xi$
and $\zeta$ such that $\tg_{\xi,\zeta}$ is an isthmus- and loop-smooth invariant. Therefore this kind of transforation cannot be applied for each
$\tg$. In particular, (\ref{e-li}) is not valid for the Tutte polynomial because $y\neq x/(x-1)$.

\end{document}